\newtheorem{theorem}{Theorem}[]
\newtheorem{lemma}[theorem]{Lemma}
\newtheorem{corollary}[theorem]{Corollary}
\newtheorem*{maintheorem1}{Theorem~\ref{thm:main}}
\newtheorem*{maintheorem2}{Theorem~\ref{thm:second}}
\newtheorem*{maintheorem3}{Theorem~\ref{thm:thick}}
\newtheorem*{maintheorem4}{Theorem~\ref{thm:Gdelta}}
\theoremstyle{definition}
\newtheorem*{theorem*}{The vertex-induced graph theorem}
\theoremstyle{remark}
\newtheorem*{claim}{Claim}
\newtheorem*{sub-claim}{sub-claim}
\newcommand{\R}{\mathbb{R}}
\newcommand{\F}{\mathcal{F}}
\newcommand{\N}{\mathbb{N}}
\newcommand{\e}{\varepsilon}
\newcommand{\explicitSet}[1]{\left\lbrace #1 \right\rbrace}
\newcommand{\set}[2]{\explicitSet{#1 \colon #2}}
\newcommand{\0}{\emptyset}
\newcommand{\card}[1]{\left\lvert #1 \right\rvert}
\newcommand{\sub}{\subseteq}
\newcommand{\tr}[1]{[\![#1]\!]}
\begin{document}

\title{Which subsets of an infinite random graph look random?}
\author{Will Brian}
\address {
William R. Brian\\
Department of Mathematics\\
Baylor University\\
One Bear Place \#97328\\
Waco, TX 76798-7328}
\email{wbrian.math@gmail.com}
\subjclass[2010]{05C63, 05C80, 03E05, 05D10}
\keywords{countable random graph, induced subgraphs, partition regularity}

\maketitle

\begin{abstract}
Given a countable graph, we say a set $A$ of its vertices is \emph{universal} if it contains every countable graph as an induced subgraph, and $A$ is \emph{weakly universal} if it contains every finite graph as an induced subgraph. We show that, for almost every graph on $\mathbb N$, $(1)$ every set of positive upper density is universal, and $(2)$ every set with divergent reciprocal sums is weakly universal. We show that the second result is sharp (i.e., a random graph on $\N$ will almost surely contain non-universal sets with divergent reciprocal sums) and, more generally, that neither of these two results holds for a large class of partition regular families.
\end{abstract}

\section{Introduction}




All the graphs considered here are simple and undirected, and $\N$ denotes the natural numbers (without $0$).

This paper is about random graphs on $\N$. One may imagine forming a graph on $\N$ by a random process whereby each possible edge is included or excluded, independently of the others, with probability $\nicefrac{1}{2}$. This process was considered by Erd\H{o}s and R\'enyi in \cite{E&R}, where they proved that it almost surely results in the same graph every time (up to isomorphism). This graph is in various places called the \emph{Erd\H{o}s-R\'enyi graph}, the \emph{countable random graph}, or the \emph{Rado graph} (it was studied by Richard Rado early on in \cite{Rad}). This graph has been thoroughly studied, as has the analogous random process for finite graphs.

Of the many important properties of the countable random graph, one of the most well known is that it contains a copy of every countable graph as an induced subgraph. This is easily proved by induction, using the so-called \emph{extension property} of the countable random graph, which states that for every finite set $F$ of vertices and every $A \sub F$, there is a vertex outside of $F$ connected to everything in $A$ and nothing in $F-A$. A graph that contains a copy of every countable graph is called \emph{universal}.

Thus the countable random graph contains copies of the infinite complete graph, the infinite empty graph, and everything in between. In particular, there are subsets $A \sub \N$ such that the induced subgraph on $A$ looks highly non-random. The idea of this paper is to explore the simple question: \emph{which subsets?} More specifically, we will, for various notions of what it means to be a ``large'' subset of $\N$, determine whether we should expect large subsets of a random graph to look random.

Given a graph on $\N$, let us say that $A \sub \N$ is \emph{universal} if it contains a copy of every countable graph as an induced subgraph (equivalently, $A$ is universal if it contains a copy of the countable random graph). Let us say that $A \sub \N$ is \emph{weakly universal} if it contains a copy of every finite graph.

In Section~\ref{sec:pud}, we prove two positive results along these lines:

\begin{theorem}\label{thm:main}
For almost every graph on $\N$, every set with positive upper density is universal.
\end{theorem}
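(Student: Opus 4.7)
The plan is to use the almost-sure density properties of the random graph to drive a back-and-forth construction of a Rado-graph subset inside any set $A$ of positive upper density. For each finite $F \sub \N$ and $S \sub F$, let $B_{F,S}$ denote the set of $v \in \N \setminus F$ whose adjacency pattern on $F$ agrees with $S$ (i.e., $v \sim f$ iff $f \in S$ for every $f \in F$). By the strong law of large numbers applied to the i.i.d.\ Bernoulli-$2^{-|F|}$ indicators $\mathbf{1}_{v \in B_{F,S}}$, the set $B_{F,S}$ almost surely has natural density $1/2^{|F|}$; taking a countable intersection over all pairs $(F,S)$, with probability one this holds simultaneously for every such pair. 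I fix a graph $G$ in this measure-one event and let $A \sub \N$ be arbitrary with upper density $\alpha > 0$.

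Given $G$ and $A$, the goal is to construct an increasing sequence $b_1 < b_2 < \cdots$ in $A$ whose induced subgraph in $G$ is isomorphic to the countable random graph, which certifies universality. I do this by a standard back-and-forth: a bookkeeping enumeration lists all extension demands $(F, S)$ with $F$ a finite subset of the sequence built so far and $S \sub F$, each demand appearing cofinally, and at stage $n+1$ one picks $b_{n+1} \in A$ past $b_n$ realizing the current demand's adjacency pattern. To ensure this always succeeds, I maintain the inductive invariant that for every $F \sub \{b_1, \ldots, b_n\}$ and $S \sub F$, the set $A \cap B_{F,S}$ has positive upper density, indeed at least $\alpha/2^{|F|}$. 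Under this invariant each queued demand is satisfiable at some later stage. When extending from $B_n$ to $B_{n+1}$, preserving the invariant for a new $F$ containing $b_{n+1}$ reduces, via $F = F' \cup \{b_{n+1}\}$ and $S = S' \cup \varepsilon$, to the assertion that $A \cap B_{F',S'}$ splits into two positive-upper-density halves according to adjacency to $b_{n+1}$; for a fixed deterministic $A$ and $b_{n+1}$ this is a strong-law statement about the i.i.d.\ Bernoulli-$1/2$ edges leaving $b_{n+1}$.

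The main obstacle is handling the uncountable family of positive-upper-density sets $A$ uniformly: a direct union bound over $A$ is not available, and indeed naive attempts fail because one can choose $A$ adaptively to $G$ so as to miss a single $B_{F,S}$. I expect this is overcome by observing that once $G$ is fixed in the initial density event, all densities of the finer type classes $B_{F' \cup \{b_{n+1}\}, S' \cup \varepsilon}$ are already prescribed to be $1/2^{|F'|+1}$, so the needed balance for the new $b_{n+1}$ can be arranged by choosing it from within a region of $A$ simultaneously balanced against the finitely many type classes tracked at stage $n$. Since $A$ has positive upper density and only finitely many type classes are in play, such a balanced choice should generically exist in $A$, and the requisite events form a countable intersection independent of $A$. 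The hard part will be formalizing this simultaneous balancing so that the back-and-forth proceeds uniformly for every positive upper density $A$ within the same measure-one event, without any implicit union bound over $A$.
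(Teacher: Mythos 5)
Your overall architecture matches the paper's: fix a measure-one event concerning only the graph (densities of type classes), then run a recursion choosing $b_1,b_2,\dots$ in $A$ while maintaining the invariant that every type class over $\{b_1,\dots,b_n\}$ meets $A$ in a set of positive upper density. You have also correctly diagnosed the central difficulty --- that no union bound over the uncountably many sets $A$ is available, so the invariant must be propagated deterministically from a graph-only event. But the proposal stops exactly at that point: the passage ``such a balanced choice should generically exist in $A$ \dots\ The hard part will be formalizing this simultaneous balancing'' is the entire content of the theorem, and it is left unproved. Knowing that each class $B_{F'\cup\{b\},S'\cup\e}$ has density $2^{-|F'|-1}$ in $\N$ says nothing by itself about its intersection with an adversarially chosen $A$; some argument must convert the graph-only density facts into the existence of a suitable $b$ inside $A$.

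The paper supplies that missing step with a covering/pigeonhole argument (Lemma~\ref{lem:recursionengine}). Suppose the types over $F$ all meet $A$ in sets of upper density exceeding $2^{-(\ell+1)}$, and suppose $\ell+1$ candidate vertices $n_1,\dots,n_{\ell+1}$ each ``ruin'' a fixed class $A_i$ in the same direction, say $A_i\cap\{m: m\sim n_j\}$ is null for every $j$. Then $A_i$ is contained, up to a null set, in the set of vertices adjacent to none of $n_1,\dots,n_{\ell+1}$, which by the almost-sure density property has density exactly $2^{-(\ell+1)}$ --- contradicting the lower bound on $\overline{d}(A_i)$. Hence for each of the finitely many classes only finitely many candidates fail, and all but finitely many $n$ simultaneously preserve the invariant for every type over $F\cup\{n\}$. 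Your measure-one event (all classes $B_{F,S}$ having density $2^{-|F|}$) is in fact strong enough to run this argument, since the relevant set of non-neighbors of $L$ is $B_{L,\emptyset}$; what is missing is the argument itself. Separately, your quantitative invariant ``at least $\alpha/2^{|F|}$'' is not maintainable --- upper density is only subadditive, so splitting a class by adjacency to $b_{n+1}$ need not give two halves each of at least half the upper density --- but this is harmless, as bare positivity of upper density is all the recursion requires.
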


\begin{theorem}\label{thm:second}
For almost every graph on $\N$, every set $A \sub \N$ with $\sum_{n \in A}\frac{1}{n} = \infty$ is weakly universal.
\end{theorem}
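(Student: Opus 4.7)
The plan is to prove the contrapositive one graph at a time: for each finite graph $H$ on $k \geq 2$ vertices, I will show that almost surely no $A \sub \N$ is simultaneously induced-$H$-free and has divergent reciprocal sum. Since there are only countably many finite graphs up to isomorphism (the one-vertex case being vacuous, as every $A$ with divergent reciprocal sum is nonempty), intersecting these measure-one events yields Theorem~\ref{thm:second}.

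Fix $H$ on $k \geq 2$ vertices. The core of the argument is to show that almost surely the largest induced-$H$-free subset of $\{1,\ldots,n\}$ has size at most $C\log n$, for some $C = C_H$ and all sufficiently large $n$. The key input is the theorem of Alekseev and Bollob{\'a}s--Thomason on the asymptotic enumeration of hereditary graph properties: since $H$ is itself not induced-$H$-free, the hereditary family of induced-$H$-free graphs has finite ``coloring number,'' and so there is a constant $c = c_H > 0$ for which the number of such graphs on $m$ labelled vertices is at most $2^{(1 - c + o(1))\binom{m}{2}}$. Equivalently,
\[
P\bigl(G(m,\tfrac{1}{2}) \text{ is induced-}H\text{-free}\bigr) \leq 2^{-(c - o(1))\binom{m}{2}}.
\]
A union bound over $m$-subsets of $\{1,\ldots,n\}$ gives
\[
P\bigl(\exists\, S \sub \{1,\ldots,n\},\ |S| = m,\ G[S] \text{ is induced-}H\text{-free}\bigr) \leq \binom{n}{m} 2^{-(c - o(1))\binom{m}{2}},
\]
and choosing $m = C\log n$ with $C$ large enough in terms of $c$ makes this $2^{-\Omega((\log n)^2)}$, summable in $n$. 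Borel--Cantelli then gives the bound on induced-$H$-free subsets.

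Granting this, suppose $A \sub \N$ is induced-$H$-free. Then $|A \cap [1,n]| < C\log n$ for all large $n$, and a dyadic decomposition yields
\[
\sum_{i \in A} \frac{1}{i} \leq \sum_{j \geq 0} \frac{|A \cap [2^j, 2^{j+1})|}{2^j} \leq \sum_{j \geq 0} \frac{C(j+1)\log 2}{2^j} < \infty,
\]
contradicting the assumption of divergent reciprocal sum. Hence almost surely every such $A$ contains an induced copy of $H$, proving Theorem~\ref{thm:second}.

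The main obstacle is invoking (or bypassing) the Alekseev--Bollob{\'a}s--Thomason counting theorem. For a self-contained treatment one would need to prove directly that $P(G(m,\tfrac{1}{2}) \text{ is induced-}H\text{-free}) \leq 2^{-c_H m^2}$; the naive approach of packing $\lfloor m/k\rfloor$ vertex-disjoint $k$-tuples and using independence yields only $e^{-\Omega(m)}$, which is too weak to beat the $\binom{n}{m}$ factor in the union bound, and extracting the quadratic exponent seems to require either the hypergraph container method or a Janson-type inequality adapted to the non-monotone event of containing a prescribed induced subgraph.
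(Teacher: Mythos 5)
Your proposal is correct and follows essentially the same route as the paper: prove the contrapositive for each finite graph $H$ separately, combine a $2^{-c\binom{m}{2}}$ bound on the probability that $G(m,\tfrac12)$ is induced-$H$-free with a union bound over sets of size $\Theta(\log n)$ and Borel--Cantelli, then kill the reciprocal sum by a dyadic decomposition. The only difference is the black box supplying the quadratic-exponent bound --- you invoke the Alekseev/Bollob\'as--Thomason enumeration of hereditary properties, while the paper cites the exponential bound of Janson, {\L}uczak, and Ruci\'nski --- and your closing paragraph correctly identifies this as the one step that is not elementary.
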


In Section~\ref{sec:neg} we will prove two negative results, each stating that the conclusion of one or both of these theorems fails for a broad class of notions of largeness. One consequence is that the conclusion of Theorem~\ref{thm:second} cannot be improved from ``weakly universal'' to ``universal.'' Indeed, one gets from Section~\ref{sec:neg} the impression that the positive upper density sets are nearly alone in satisfying Theorem~\ref{thm:main}.

In order to state these negative results more precisely, recall that a \emph{Furstenberg family}, or simply a \emph{family}, is a nonempty collection $\F$ of subsets of $\N$ that is closed under taking supersets: if $A \in \F$ and $A \sub B$, then $B \in \F$. A family $\F$ is \emph{partition regular} if whenever $A \in \F$ and $A = \bigcup_{i \leq n}A_i$, then there is some $i \leq n$ with $A_i \in \F$. Intuitively, we think of partition regular families as providing a coherent notion of what it means for a set of natural numbers to be ``large.'' For example, the sets of positive upper density and the sets having divergent reciprocal sums both form partition regular families.

In Section~\ref{sec:neg} we prove the following:

\begin{theorem}\label{thm:thick}
For almost every graph on $\N$, there is a thick $A \sub \N$ with no edges. 
\end{theorem}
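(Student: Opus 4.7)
The plan is to construct a thick edgeless set by a greedy inductive procedure: I would produce pairwise disjoint intervals $I_1, I_2, I_3, \ldots$ of consecutive integers with $|I_n|=n$, chosen so that the induced subgraph on $A = \bigcup_n I_n$ has no edges at all. Since $A$ then contains a length-$n$ interval for every $n$, it is automatically thick, and the theorem is reduced to carrying out the inductive step on a probability-$1$ event.

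At stage $n$, after selecting $I_1, \ldots, I_{n-1}$, set $F = I_1\cup\cdots\cup I_{n-1}$. The task is to find an interval $I_n$ of length $n$ in $(\max F,\infty)$ that is internally edgeless and sends no edges to $F$. The key auxiliary statement I would prove is uniform in $F$: \emph{for every finite $F \sub \N$ and every $n \in \N$, almost surely there exist infinitely many length-$n$ intervals above $\max F$ that are edgeless and send no edges to $F$.} To see this, partition $(\max F, \infty)$ into consecutive disjoint blocks $J_0, J_1, J_2, \ldots$ each of length $n$. The event ``$J_j$ is edgeless and has no edges to $F$'' depends only on the potential edges inside $J_j$ and between $J_j$ and $F$; for different indices $j$ these collections of potential edges are disjoint, so the events are mutually independent, each of the constant positive probability $2^{-\binom{n}{2}-n|F|}$. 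The second Borel--Cantelli lemma then gives the claim.

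Because the collection of pairs $(F, n)$ with $F \sub \N$ finite and $n \in \N$ is countable, intersecting over all of them yields a single full-measure event on which the auxiliary statement holds simultaneously for every $F$ and every $n$. Working inside that event, the inductive construction proceeds without obstruction: at stage $n$ the finite set $F$ built so far is of course covered, and a suitable $I_n$ can be located.

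The main subtlety --- and the only reason the argument requires any care at all --- is that the set $F$ at stage $n$ is itself a random quantity depending on the graph revealed through earlier stages, so a naive Borel--Cantelli argument along the constructed history would not quite be legal. This is exactly what forces the uniform-over-$F$ formulation of the key estimate together with the countable-intersection trick. Beyond this measurability point, nothing deep is going on; the combinatorial content is just the elementary observation that independence of disjoint potential-edge sets makes the probability of a usable interval a positive constant, which $\omega$-many independent trials then realize.
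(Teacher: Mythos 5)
Your proposal is correct and follows essentially the same route as the paper: a lemma giving, for each fixed finite $F$ and each $n$, an almost-sure edgeless length-$n$ block with no edges to $F$ (via independence of disjoint potential-edge sets along disjoint blocks), a countable intersection over all pairs $(F,n)$, and then the greedy recursion building disjoint intervals $I_n$. The measurability point you flag is exactly why the paper also states its property $(\ddagger \hspace{-2.57mm} -\!)$ uniformly over all finite $F$ before running the recursion.
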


\begin{theorem}\label{thm:Gdelta}
Let $\F$ be any $\mathbf \Pi^0_2$ partition regular family. For almost every graph on $\N$, there is some $A \in \F$ that is not universal.
\end{theorem}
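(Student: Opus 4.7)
The plan is to construct, for every graph $G$ on $\N$ (which is stronger than the stated ``almost every''), a set $A \in \F$ whose induced subgraph contains no copy of the Rado graph as an induced subgraph. The construction is a Cantor-style stage-wise refinement: at each stage I apply partition regularity to a ``continuation set'' in $\F$ to extend a finite tentative core by a few vertices, simultaneously securing that the eventual limit lies in one of countably many open sets presenting $\F$ and that the induced graph on the limit is too rigid to contain the Rado graph.

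Write $\F = \bigcap_n V_n$ with each $V_n$ open. I inductively maintain a finite core $F_n$, a continuation set $A_n \in \F$ disjoint from $F_n$, and a type $\chi_n \colon F_n \to \{0, 1\}$ such that every $w \in A_n$ satisfies $w \sim f \iff \chi_n(f) = 1$ for each $f \in F_n$. At stage $n$ I first secure the witness for $V_n$: since $\F$ is upward closed, $A_n \cup F_n \in \F \subseteq V_n$, so there is a basic open set $[c_n, d_n] \subseteq V_n$ containing $A_n \cup F_n$, with $c_n \subseteq A_n \cup F_n$ and $d_n$ disjoint from both $A_n$ and $F_n$. I then force each $v \in c_n \cap A_n$ into $F$ in turn, refining $A$ at each substep via partition regularity to a subset lying in $\F$ on which every element has a uniform adjacency pattern with $v$, and recording this pattern as $\chi(v) \in \{0,1\}$. (If at any moment partition regularity yields a singleton $\{v\}$ as the $\F$-piece, I terminate and return the finite, trivially non-universal set $A = \{v\}$.) Finally I add one ``backbone'' vertex from the current $A$ by the same split-and-pick trick, and advance to stage $n+1$.

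The limit $F_\infty = \bigcup_n F_n$ lies in $\F$: the commitment $c_n \subseteq F_\infty$ is enforced by construction, and $d_n \cap F_\infty = \emptyset$ is automatic ($d_n$ is disjoint from $F_n$ by construction and from all later continuations $A_m \subseteq A_n$ from which future additions are drawn), so $F_\infty \in [c_n, d_n] \subseteq V_n$ for every $n$. For non-universality, enumerate $F_\infty = \{w_1, w_2, \ldots\}$ in order of addition and let $\epsilon_k = \chi(w_k)$ be the value assigned when $w_k$ was added. The invariants force $w_k \sim w_j \iff \epsilon_j = 1$ whenever $w_j$ was added at a strictly earlier stage than $w_k$; consequently every $w_k$ with $\epsilon_k = 0$ has only finitely many neighbors in $F_\infty$ (the earlier $w_j$ with $\epsilon_j = 1$, plus finitely many from its own stage). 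If some $S \subseteq F_\infty$ had $G[S]$ isomorphic to the Rado graph, then (since the Rado graph has no finite-degree vertex) $S$ must avoid every $\epsilon = 0$ vertex; but on $\{w_k : \epsilon_k = 1\}$ every pair from different stages is adjacent, so each vertex of $S$ has at most the finitely many others from its own stage as potential non-neighbors in $S$, contradicting the Rado graph's property that every vertex has infinite non-degree.

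The main obstacle I anticipate is the witness step: a priori a basic open $[c, d] \subseteq V_n$ around $A_n$ might have $d$ meeting the already-committed $F_n$, making the commitments $F_n \subseteq F_\infty$ and $d \cap F_\infty = \emptyset$ incompatible. The resolving trick is to work not with $A_n$ but with $A_n \cup F_n$; although the individual $V_n$ need not themselves be upward closed, $\F$ is, so $A_n \cup F_n \in V_n$ and any basic neighborhood of $A_n \cup F_n$ inside $V_n$ has negative part automatically disjoint from $F_n$.
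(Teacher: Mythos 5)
Your proof is correct, but it takes a genuinely different route from the paper's. The paper's argument is probabilistic at its core: it shows, via Kolmogorov's zero--one law applied to $\F$ viewed as a tail set under the biased product measures $\mu_p$ with $p = 2^{-|F|}$, that for almost every graph every type over every finite set defines a set belonging to $\F$; it then uses the type ``connects to nothing in $[1,k_{n-1}]$'' at each stage to build an $A \in \F$ all of whose connected components are finite. You dispense with randomness entirely: applying partition regularity to the two-cell split of the current continuation set into neighbours and non-neighbours of each vertex being added keeps the continuation set in $\F$ for an \emph{arbitrary} graph, and the limit set, in which every vertex is eventually complete or eventually anticomplete to everything added at later stages, cannot induce the Rado graph --- your finite-degree versus finite-co-degree dichotomy is exactly right, and the finitely many uncontrolled same-stage adjacencies are harmless. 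Your handling of the two delicate points is also sound: taking the basic neighbourhood around $A_n \cup F_n$ rather than $A_n$ so that its negative part automatically misses $F_n$, and bailing out if partition regularity ever hands you a finite member of $\F$. The trade-off between the two arguments: yours proves the stated theorem for \emph{every} graph on $\N$ rather than almost every one, while the paper's needs randomness but delivers the structurally stronger conclusion (finite connected components) that it uses to argue that Theorem~\ref{thm:second} is sharp.
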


The notions of thickness and of $\mathbf \Pi^0_2$ partition regular families will be defined in Section~\ref{sec:neg}. The important thing to note here is that practically every ``naturally occurring'' partition regular family of subsets of $\N$ is covered by the hypotheses of Theorems \ref{thm:thick} and \ref{thm:Gdelta}, with the family of positive upper density sets being the only notable exception.


\section{Two positive results}\label{sec:pud}

We begin this section with a proof of Theorem \ref{thm:main}:

\begin{maintheorem1}
For almost every graph on $\mathbb N$, every set of positive upper density is universal.
\end{maintheorem1}

We approach the proof through a sequence of lemmas distilling the necessary bits of probability theory. For Theorem~\ref{thm:main} these are fairly elementary: only a few observations based on the strong law of large numbers are required. 

Before stating the first lemma we require one more definition: for a finite set of vertices $F$, a \emph{type} over $F$ (or, context permitting, simply a \emph{type}) is a way of specifying how a vertex connects to each element of $F$. Formally, a type is a predicate defined in terms of an edge relation. For example, if $F = \{a,b,c\}$, then there are eight types over $F$; one of them is the predicate ``connects to $a$ and $b$ but not to $c$'' (and the other seven are defined similarly). For a fixed graph $G$ and $F \sub G$, we could define a type as a set of vertices rather than as a predicate. However, in what follows we will want to consider a single type in several different graphs at once (different graphs on the same vertex set), and we want to be able to speak sensibly about a single type defining different sets in different graphs.

Our first lemma is a fairly straightforward consequence of the strong law of large numbers. It can be thought of as a stronger version of the extension property of the countable random graph mentioned in the introduction.

\begin{lemma}\label{lem:stronglaw}
Almost every graph on $\N$ has the following property:
\begin{itemize}
\item[$(*)$] Let $F_1, \dots, F_n$ be pairwise disjoint subsets of $\N$, each of size $k$, and for each $i \leq n$ let $t_i$ be some fixed type over $F_i$. The set of all vertices that are not of type $t_i$ for any $i \leq n$ is a set of density $\left(1 - \frac{1}{2^k}\right)^n$.
\end{itemize}
\end{lemma}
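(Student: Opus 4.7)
The plan is to fix a single configuration $(F_1, \dots, F_n, t_1, \dots, t_n)$ and show that property $(*)$ holds for this configuration with probability $1$; then take an intersection over the countably many such configurations. Since there are only countably many finite tuples of pairwise disjoint finite subsets of $\N$ together with type-predicates on them, this intersection will still have probability $1$, and on the resulting event $(*)$ will hold universally.

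For a fixed configuration, consider for each vertex $v \notin \bigcup_{i \leq n} F_i$ the indicator random variable $X_v$ that equals $1$ precisely when $v$ is not of type $t_i$ for any $i \leq n$. Whether $v$ has type $t_i$ depends only on the $k$ potential edges between $v$ and $F_i$, so (since $F_1, \dots, F_n$ are pairwise disjoint) the events ``$v$ has type $t_i$'' are mutually independent as $i$ varies, each with probability $1/2^k$. Hence $\mathbb{E}[X_v] = \left(1 - \frac{1}{2^k}\right)^n$. Moreover, for distinct $v \neq w$, the variable $X_v$ depends only on the edges between $v$ and $\bigcup_{i \leq n} F_i$, and $X_w$ only on the edges between $w$ and $\bigcup_{i \leq n} F_i$; these edge-sets are disjoint, so the family $\{X_v\}$ is i.i.d.

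The strong law of large numbers then yields $\frac{1}{N}\sum_{v \leq N} X_v \to \left(1 - \frac{1}{2^k}\right)^n$ almost surely, and the finitely many $v \in \bigcup_{i \leq n} F_i$ excluded from the sum contribute nothing to the limit. This is precisely the statement that the set of vertices of none of the given types has natural density $\left(1 - \frac{1}{2^k}\right)^n$, completing the argument for this configuration. A countable intersection over all configurations finishes the proof.

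The only real obstacle is a careful verification of the two independence claims — that disjointness of the $F_i$ makes the type-membership events independent across $i$ (so one can multiply probabilities to get $(1-1/2^k)^n$), and that the potential-edge sets $\{v\}\times\bigcup_{i \leq n} F_i$ for distinct $v$ are disjoint (so that the $X_v$ are mutually independent, allowing SLLN to apply). Once these observations are in place, the lemma reduces to SLLN plus a routine countable union.
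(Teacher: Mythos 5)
Your proposal is correct and follows the same route as the paper: reduce to a single fixed configuration by countable additivity, compute the per-vertex probability $\left(1-\frac{1}{2^k}\right)^n$ via independence over the disjoint $F_i$, and apply the strong law of large numbers. You simply spell out the independence verifications that the paper leaves as ``an easy computation.''
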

\begin{proof}
There are only countably many finite families $F_1,\dots,F_n$ of pairwise disjoint size-$k$ subsets of $\N$. By the countable additivity of probability, it suffices to prove the conclusion of the lemma for a single, fixed collection $F_1, \dots, F_n$ of pairwise disjoint size-$k$ subsets of $\N$.

Let $F_1, \dots, F_n$ be pairwise disjoint subsets of $\N$, each of size $k$, and for each $i \leq n$ let $t_i$ be some fixed type over $F_i$. Fix $m \in \N - \bigcup_{i \leq n}F_i$. An easy computation shows that the probability that $n$ is not of type $t_i$ for any $i$ is $\left(1 - \frac{1}{2^k}\right)^n$. The desired conclusion follows from the strong law of large numbers.
\end{proof}

\begin{lemma}\label{lem:recursionengine}
Almost every graph on $\N$ has the following property:
\begin{itemize}
\item[$(\dagger)$] Let $F$ be a finite subset of $\N$, and let $A \sub \N$. Suppose that for every type $t$ over $F$,
$$\set{m \in A - F}{m \text{ has type }t}$$
has positive upper density. Then for all but finitely many $n \in \N$, for every type $t'$ over $F \cup \{n\}$ the set
$$\set{m \in A - (F \cup \{n\})}{m \text{ has type }t'}$$
has positive upper density.
\end{itemize}
\end{lemma}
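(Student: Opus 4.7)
The plan is to use countable additivity together with Lemma~\ref{lem:stronglaw} to single out a full-measure set of ``good'' graphs, and then verify $(\dagger)$ deterministically for each of them. Applying Lemma~\ref{lem:stronglaw} with each $F_i$ a singleton $\{n_i\}$ and each $t_i$ the type ``is adjacent to $n_i$,'' and taking a countable union over all finite subsets of $\N$, almost every graph $G$ has the property that for every finite $\{n_1, \dots, n_j\} \sub \N$, the set of vertices adjacent to none of $n_1, \dots, n_j$ has density exactly $1/2^j$. I fix such a $G$.

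Given $F$ and $A$ as in the hypothesis of $(\dagger)$, I argue by contradiction: suppose infinitely many $n \in \N - F$ are ``bad,'' meaning some type $t'$ over $F \cup \{n\}$ witnesses failure of the conclusion. Each $t'$ is determined by a type $t$ over $F$ together with a bit $b \in \{0, 1\}$ telling whether the vertex is adjacent to $n$, so there are only $2^{|F|+1}$ possible $t'$. Pigeonhole then supplies an infinite sequence $n_1 < n_2 < \cdots$ of bad $n$'s all sharing a single witness pair $(t, b)$, and by exchanging ``adjacent to $n_i$'' with ``not adjacent to $n_i$'' if necessary, I may assume $b = 1$. Writing $B = \set{m \in A - F}{m \text{ has type } t}$, my standing assumption is then that $B$ has positive upper density $\a$ but $B \cap \{m : m \text{ is adjacent to } n_i\}$ has upper density $0$ for every $i$.

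The heart of the proof is a straightforward induction on $j$ showing that the set $B_j = \{m \in B : m \text{ is adjacent to none of } n_1, \dots, n_j\}$ still has upper density $\a$. For the induction step, $B_j$ differs from $B_{j-1}$ by $B_{j-1} \cap \{m : m \text{ is adjacent to } n_j\}$, which sits inside $B \cap \{m : m \text{ is adjacent to } n_j\}$, a set of upper density $0$ and hence of density $0$; removing a density-$0$ set preserves upper density. On the other hand, $B_j$ is contained in the set of vertices adjacent to none of $n_1, \dots, n_j$, which by the first paragraph has density $1/2^j$, so $B_j$ has upper density at most $1/2^j$. Choosing $j$ with $1/2^j < \a$ yields the contradiction.

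The main work is the pigeonhole step, which converts the failure of $(\dagger)$ into a single bad pair $(t, b)$ shared by infinitely many witnesses $n_i$; without this one could not deploy the density-$1/2^j$ estimate coming from Lemma~\ref{lem:stronglaw}. The remaining bookkeeping --- that finite discrepancies between $A$ and $A - F$ do not affect upper density, and that swapping $b = 1$ with $b = 0$ amounts to complementing neighborhoods --- is routine.
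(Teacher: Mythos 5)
Your proof is correct and follows essentially the same route as the paper's: both rest on the observation that a set of positive upper density cannot be covered by finitely many density-zero sets together with a set of density $\frac{1}{2^{j}}$ once $\frac{1}{2^{j}}$ drops below that upper density, with Lemma~\ref{lem:stronglaw} supplying the $\frac{1}{2^{j}}$ estimate. The only difference is presentational: the paper bounds the number of bad $n$ explicitly by $2^{k+1}\ell$ for a suitable $\ell$, whereas you reach the same conclusion by contradiction via pigeonhole on the finitely many pairs $(t,b)$.
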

\begin{proof}
It suffices to show that every graph satisfying property $(*)$ from Lemma~\ref{lem:stronglaw} also satisfies property $(\dagger)$. 

Suppose some graph on $\N$ satisfies $(*)$. Fix $F \sub \N$ with $|F| = k$, let $t_1,t_2,\dots,t_{2^k}$ denote all the different types over $F$, and let $A \sub \N$ have the property that
$$A_i = \set{m \in A - F}{m \text{ has type }t_i}$$
has positive upper density for every $i \leq 2^k$. Pick $\ell \in \N$ large enough that $\frac{1}{2^{\ell+1}}$ is (strictly) less than the minimum of the upper densities of the $A_i$.

\begin{claim}
For any $i \leq 2^k$, there are at most $\ell$ values of $n \in \N - F$ such that the set
$$C_i(n) = \set{m \in A_i - (F \cup \{n\})}{m \text{ connects to } n}$$
does not have positive upper density.
\end{claim}

\begin{proof}[Proof of claim]
Suppose otherwise. Then there is a set $L$ of size $\ell+1$ such that for each $n \in L$, $C_i(n)$ has density $0$. For each $n \in L$, let
$$D(n) = \set{m \in \N - (F \cup \{n\})}{m \text{ does not connect to } n}.$$
By $(*)$, the density of $D_L = \bigcap_{n \in L}D(n)$ is $\left(1 - \frac{1}{2}\right)^{\ell+1} = \frac{1}{2^{\ell+1}}$. However,
$$A_i \sub D_L \cup \bigcup_{n \in L}C_i(n).$$
Because each $C_i(n)$ has density $0$, this implies that $A_i$ has positive upper density at most the density of $D_L$, namely $\frac{1}{2^{\ell+1}}$. This contradicts our choice of $\ell$.
\end{proof}

Interchanging the roles of ``connects'' and ``does not connect,'' the same argument can be used to show that, for each $i \leq 2^k$, there are at most $\ell$ values of $n \in \N - F$ such that the set
$$D_i(n) = \set{m \in A_i - (F \cup \{n\})}{m \text{ does not connect to } n}$$
fails to have positive upper density.

By this claim, for each $i \leq 2^k$ there are at most $2 \ell$ values of $n$ that make one of the sets $C_i(n)$ or $D_i(n)$ fail to have positive upper density. Thus there are at most $2^{k+1} \ell$ values of $n$ such that one of $C_i(n)$ or $D_i(n)$ has density $0$ for some $i \leq 2^k$.

Fix some $n$ not in this finite set: i.e., some $n$ such that every $C_i(n)$ and $D_i(n)$, for $i \leq 2^k$, has positive upper density. 

Let $t$ be some type over $F \cup \{n\}$. There is some $i \leq 2^k$ such that ``$m$ has type $t$ over $F \cup \{n\}$'' is either the assertion ``$m$ has type $t_i$ over $F$ and $m$ connects to $n$'' or the assertion ``$m$ has type $t_i$ over $F$ and $m$ does not connect to $n$.'' In other words,
$$\set{m \in A - (F \cup \{n\})}{m \text{ has type }t}$$
is either $C_i(n)$ or $D_i(n)$. Our choice of $n$ guarantees that, either way, this set has positive upper density.
\end{proof}

\begin{proof}[Proof of Theorem~\ref{thm:main}]
By Lemma~\ref{lem:recursionengine}, it suffices to show that if a graph on $\N$ satisfies $(\dagger)$ then every $A \sub \N$ of positive upper density contains every countable graph as an induced subgraph. Suppose we have a graph on $\N$ satisfying $(\dagger)$, let $A \sub \N$ have positive upper density, and let $G$ be a countable graph. Let $\set{v_n}{n \in \N}$ be an enumeration of the vertices of $G$.

We will use recursion to find a sequence $a_1,a_2,\dots,a_n,\dots$ of points in $A$ such that the map $v_n \mapsto a_n$ is an isomorphism from $G$ to the induced subgraph on $\set{a_n}{n \in \N}$. Pick $a_1 \in A$ so that, for each of the two types $t_i$ ($i = 1,2$) over $\{a_1\}$,
$$A_i = \set{m \in A - \{a_1\}}{m \text{ has type }t_i}$$
has positive upper density. Some such $a_1$ exists by $(\dagger)$ (setting $F = \0$).

Assume now that $a_1,a_2,\dots,a_{n-1}$ have all been chosen in such a way that the following two inductive hypotheses are satisfied:
\begin{itemize}
\item the map $v_j \mapsto a_j$, $j < n$, is an isomorphism of induced subgraphs.
\item for each type $t$ over $\{a_1,a_2,\dots,a_{n-1}\}$, $\set{m \in A}{m \text{ has type }t}$ has positive upper density.
\end{itemize}
By the first hypothesis, there is some type $t^G_{n-1}$ over $\{a_1,a_2,\dots,a_{n-1}\}$ such that any $m \in \N - \{a_1,a_2,\dots,a_{n-1}\}$ of type $t^G_{n-1}$ will have the property that the map
$$v_j \mapsto a_j \text{ for } j < n \text{, and } v_n \mapsto m$$
is an isomorphism of induced subgraphs. Using $(\dagger)$, we may find some $a_n \in A$ of type $t_{n-1}^G$ such that, for every type $t$ over $\{a_1,a_2,\dots,a_n\}$,
$$\set{m \in A - \{a_1,a_2,\dots,a_n\}}{m \text{ has type }t}$$
has positive upper density. This choice of $a_n$ preserves both inductive hypotheses for the next step of the recursion.

Thus we may construct an infinite sequence $a_1,a_2,\dots,a_n,\dots$ of points in $A$ such that for each $n$ the map $v_i \mapsto a_i$, $i \leq n$, is an isomorphism of induced subgraphs. It follows that the map $v_n \mapsto a_n$ is an isomorphism from $G$ to the induced subgraph on $\set{a_n}{n \in \N}$.
\end{proof}

We now move to the proof of Theorem~\ref{thm:second}. For convenience, let us say that $A \sub \N$ is \emph{substantial} whenever $\sum_{n \in A}\frac{1}{n} = \infty$.

\begin{maintheorem2}
For almost every graph on $\N$, every substantial set is weakly universal.
\end{maintheorem2}

Again we will begin the proof with a few lemmas. If $\varphi(H)$ expresses a property of a (variable) graph $H$, we define
$$P_n(\varphi(H)) = \frac{1}{2^{n \choose 2}} \card{\set{H}{H \text{ is a graph on } \{1,2,\dots,n\} \text{ and }\varphi(H)}}$$
More colloquially, $P_n(\varphi(G))$ is the probability that a randomly chosen graph on $n$ (labelled) vertices has property $\varphi$. 

A graph $H$ is called \emph{$G$-free} if it does not contain $G$ as an induced subgraph.

\begin{lemma}[Janson, \L{}uczak, and Ruci\'nski]\label{lem:jlr}
Let $G$ be a finite graph. There is a positive constant $c$ such that
$$P_n(H \ \mathrm{is} \ G\text{-}\mathrm{free}) \leq 2^{-cn^2}$$
for sufficiently large $n$.
\end{lemma}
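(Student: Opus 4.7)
The plan is to prove this by a concentration-of-measure argument. Let $k = |V(G)|$ and define $Y = Y(H)$ to be the number of $k$-element subsets $S$ of $\{1,2,\dots,n\}$ for which the induced subgraph $H[S]$ is isomorphic to $G$. Since ``$H$ is $G$-free'' is exactly the event $\{Y = 0\}$, it suffices to show $P_n(Y = 0) \leq 2^{-cn^2}$. The expectation is easy to write down: if $p(G)$ denotes the probability that a uniformly random graph on $k$ labelled vertices is isomorphic to $G$, then $p(G) > 0$ is a constant depending only on $G$, and
$$E[Y] \,=\, \binom{n}{k} p(G) \,=\, \Theta(n^k).$$

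Next I would bound the coordinatewise Lipschitz constant of $Y$, viewed as a function of the $\binom{n}{2}$ independent Bernoulli edge indicators. Flipping the indicator of a single edge $\{u,v\}$ can change whether a $k$-set $S$ induces $G$ only when $\{u,v\} \sub S$; the number of such $S$ is $\binom{n-2}{k-2} = O(n^{k-2})$, and each such $S$ contributes $0$ or $1$ to $Y$. So a single edge flip alters $Y$ by at most $O(n^{k-2})$, meaning $Y$ is $O(n^{k-2})$-Lipschitz in every coordinate.

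Now apply McDiarmid's bounded-differences inequality with deviation $t = E[Y]/2 = \Theta(n^k)$. The denominator $\sum c_i^2$ is of order $\binom{n}{2} \cdot n^{2k-4} = O(n^{2k-2})$, while $t^2$ is of order $n^{2k}$, so
$$P_n(Y = 0) \,\leq\, P_n\bigl(Y \leq E[Y] - t\bigr) \,\leq\, \exp\!\left(-\frac{2 t^2}{\sum c_i^2}\right) \,=\, \exp\bigl(-\Theta(n^2)\bigr),$$
which is at most $2^{-cn^2}$ for a suitable $c > 0$ and all $n$ large enough. The only step demanding any real care is the Lipschitz estimate -- one must verify that flipping a single edge affects at most $\binom{n-2}{k-2}$ of the indicators making up $Y$ -- but this is routine bookkeeping. (Janson, \L{}uczak and Ruci\'nski may well have argued via Janson's inequality or an FKG-type bound, but the McDiarmid route sketched above is the most elementary path to the conclusion I know of.)
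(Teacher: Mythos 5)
Your argument is correct, but it is a genuinely different route from the paper's. The paper does not reprove the bound: it disposes of the edgeless case separately (by complementation, citing Frieze), and for $G$ with an edge it quotes the Janson--\L{}uczak--Ruci\'nski exponential lower-tail bound $P_n(H \text{ is } G\text{-free}) \leq 2^{-c_0 M}$ with $M = \min n^v 2^{-e}$ over subgraphs of $G$ with $e>0$ edges, observing that $M = n^2/2$ for large $n$. You instead give a self-contained bounded-differences argument on the induced-copy count $Y$; your Lipschitz bound $\binom{n-2}{k-2} = O(n^{k-2})$ and the resulting exponent $t^2/\sum c_i^2 = \Theta(n^{2k})/O(n^{2k-2}) = \Theta(n^2)$ both check out, and $\{Y=0\} \subseteq \{Y \leq E[Y]/2\}$ closes the argument. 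Your route buys two things. First, it treats all finite $G$ uniformly, with no case split for the edgeless graph. Second, and more substantively, it works directly with \emph{induced} copies, which is what the paper's definition of $G$-free actually requires; the cited JLR inequality concerns (not necessarily induced) subgraph containment, and ``no copy'' implies ``no induced copy'' in the wrong direction for a direct transfer, so your version is arguably cleaner on exactly the point the paper passes over. What the citation-based approach buys in exchange is generality in the edge probability: the JLR bound is meaningful for sparse $G(n,p)$, where a crude bounded-differences estimate degrades badly, but that generality is irrelevant here since only $p = \nicefrac{1}{2}$ is used.
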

\begin{proof}
Let $G$ be a finite graph. If $G$ has no edges then this result is fairly routine (and in fact much stronger bounds have been found; see, e.g., \cite{Frieze}), and in any case the result for $G$ follows by symmetry from the result for the complement of $G$. Thus we may assume that $G$ has an edge.

In \cite{jlr}, Janson, \L{}uczak, and Ruci\'nski prove that (when $G$ has an edge) there is a positive constant $c_0$ such that, for all $n$,
$$P_n(H \ \mathrm{is} \ G\text{-}\mathrm{free}) \leq 2^{-c_0M}, \text{ where}$$
$$M = \min \set{\frac{n^v}{2^{e}}}{G \text{ has a subgraph with }v \text{ vertices and } e > 0 \text{ edges}}.$$
When $G$ is fixed, we will have $M = \frac{1}{2}n^2$ for all sufficiently large $n$. Setting $c = \frac{1}{2}c_0$ proves the lemma.
\end{proof}

\begin{lemma}\label{lem:finitegraphs}
Let $G$ be a finite graph, and let $\varphi(H,G,k)$ abbreviate the statement that $H$ contains a $G$-free induced subgraph of size $k$. There is a fixed natural number $N$ such that, if $f: \N \to \N$ is the function $n \mapsto \lceil N \log_2 n \rceil$, then
$$P_n(\varphi(H,G,f(n))) < n^{-2f(n)}$$
for all sufficiently large $n$.
\end{lemma}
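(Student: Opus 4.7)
The plan is a straightforward union bound married to Lemma~\ref{lem:jlr}. Let $c > 0$ be the constant from Lemma~\ref{lem:jlr} for the given graph $G$, so that $P_k(H \text{ is } G\text{-free}) \leq 2^{-ck^2}$ for all sufficiently large $k$. We choose $N$ to be any fixed integer with $N > 3/c$; setting $f(n) = \lceil N \log_2 n \rceil$, note that $f(n) \to \infty$, so Lemma~\ref{lem:jlr} is applicable to $k = f(n)$ once $n$ is large enough.

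For a fixed $n$-vertex graph $H$, the event $\varphi(H,G,f(n))$ says that some size-$f(n)$ subset $S$ of $\{1,\dots,n\}$ induces a $G$-free subgraph. Since the induced subgraph on a fixed $S$ of size $k = f(n)$ is distributed as a uniform random graph on $k$ vertices, Lemma~\ref{lem:jlr} gives $P_n(\text{the induced subgraph on } S \text{ is } G\text{-free}) \leq 2^{-ck^2}$ for sufficiently large $n$. Taking a union bound over all $\binom{n}{k}$ choices of $S$:
\[
P_n(\varphi(H,G,f(n))) \leq \binom{n}{k} 2^{-ck^2} \leq n^k \cdot 2^{-ck^2}.
\]

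It remains to verify that $n^k \cdot 2^{-ck^2} < n^{-2k}$, i.e.\ that $ck^2 > 3k\log_2 n$, i.e.\ that $ck > 3\log_2 n$. Since $k = \lceil N\log_2 n\rceil \geq N\log_2 n$ and $cN > 3$ by choice of $N$, this holds (in fact with strict inequality, since $cN - 3 > 0$ gives room to spare) for all sufficiently large $n$. This completes the bound.

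There is no real obstacle here; the only thing to be careful about is matching the constant. One might worry about whether one should union-bound over \emph{induced} subgraphs (vertex subsets) versus labelled copies, but since a $G$-free induced subgraph on $k$ vertices is witnessed by its vertex set, the count $\binom{n}{k}$ is exactly right, and the wasteful estimate $\binom{n}{k} \leq n^k$ is more than absorbed by the quadratic $2^{-ck^2}$.
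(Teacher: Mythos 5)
Your proof is correct and follows essentially the same route as the paper's: fix $c$ from Lemma~\ref{lem:jlr}, take $N > 3/c$, union-bound over the $\binom{n}{k}$ vertex subsets of size $k = f(n)$, and absorb $n^{k}$ into $2^{-ck^2}$ via $ck \geq cN\log_2 n > 3\log_2 n$. Your remark that the slack $cN - 3 > 0$ yields the strict inequality claimed in the statement is a nice touch the paper glosses over.
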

\noindent \emph{Proof. }Fix a finite graph $G$. By Lemma~\ref{lem:jlr}, there is a positive constant $c$ and a natural number $M_0$ such that
$$P_n(H \ \mathrm{is} \ G\text{-}\mathrm{free}) \leq 2^{-cn^2}$$
for all $n \geq M_0$.

Let $N$ be any number larger than $\nicefrac{3}{c}$, and let $f: \N \to \N$ be the function $n \mapsto \lceil N \log_2 n \rceil$. Note that $\lim_{n \to \infty}f(n) = \infty$; in other words, there is some $M$ such that for all $n \geq M$, $f(n) \geq M_0$. 

Suppose $n \geq M$, and let $k = f(n)$. For each set $S$ of size $k$, there is a probability of $2^{-ck^2}$ that a randomly chosen graph on $S$ will be $G$-free. Thus the probability that a graph on $\{1,2,\dots,n\}$ has a $k$-sized $G$-free induced subgraph is at most ${n \choose k} 2^{-ck^2}$, which gives:
\begin{align*}
\pushQED{\qed} 
P_n(\varphi(H,G,k)) &\leq {n \choose k} 2^{-ck^2} \leq n^k \cdot 2^{-ck^2} = n^k(2^{c \lceil N \log_2 n \rceil})^{-k} \\
&\leq n^k (2^{\log_2 n})^{-cNk} \leq n^k \cdot n^{-3k} = n^{-2k}. \qedhere
\popQED
\end{align*}

\begin{lemma}\label{lem:finite}
Let $G$ be a finite graph. There is a natural number $N$ (depending only on $G$) such that almost every graph on $\N$ has the following property:
\begin{itemize}
\item[$(\ddagger)$] There is some $m \in \N$ such that for all $k \geq m$, no interval of the form $[2^k,2^{k+1})$ contains a $G$-free graph of size $kN$.
\end{itemize}
\end{lemma}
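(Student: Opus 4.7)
The plan is to reduce this statement to a Borel--Cantelli argument driven by the explicit probability bound supplied by Lemma~\ref{lem:finitegraphs}. First, invoke Lemma~\ref{lem:finitegraphs} to extract a natural number $N$ (which we may assume is a positive integer, since the proof of Lemma~\ref{lem:finitegraphs} allows any $N > 3/c$) such that, for the function $f(n) = \lceil N \log_2 n \rceil$, one has
$$P_n(\varphi(H,G,f(n))) < n^{-2 f(n)}$$
for all sufficiently large $n$. Note that with $N$ an integer, $f(2^k) = Nk$.

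Next, for each $k$, consider the event
$$E_k = \{\text{the induced subgraph on } [2^k, 2^{k+1}) \text{ contains a } G\text{-free induced subgraph of size } Nk\}.$$
Because the edges inside $[2^k, 2^{k+1})$ in a random graph on $\N$ are i.i.d. fair coin flips, the induced subgraph on this interval (which has $n = 2^k$ vertices) is distributed exactly like a uniformly random graph on $\{1,\dots,n\}$. Therefore, for all sufficiently large $k$,
$$\Pr(E_k) \;=\; P_{2^k}\!\bigl(\varphi(H,G,f(2^k))\bigr) \;<\; (2^k)^{-2 f(2^k)} \;=\; 2^{-2 N k^2}.$$

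Since $\sum_{k \geq 1} 2^{-2Nk^2} < \infty$, the Borel--Cantelli lemma yields that, almost surely, only finitely many of the events $E_k$ occur. Equivalently, almost surely there is some $m \in \N$ such that for every $k \geq m$ no interval $[2^k, 2^{k+1})$ contains a $G$-free induced subgraph of size $Nk$, which is exactly property $(\ddagger)$.

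The main (and really only) obstacle is a bookkeeping one: matching the size $Nk$ appearing in the statement of $(\ddagger)$ with the $f(n)$ appearing in Lemma~\ref{lem:finitegraphs}. This is handled by choosing $N$ to be an integer, so that $\lceil N \log_2 2^k \rceil = Nk$ on the nose. No further probabilistic input beyond the one-sided Borel--Cantelli lemma is required, since independence of the events $E_k$ is not needed to conclude that their sum of probabilities being finite forces almost-sure finitary occurrence.
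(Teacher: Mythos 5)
Your proof is correct and follows essentially the same route as the paper: identify $\Pr(E_k)$ with $P_{2^k}(\varphi(H,G,Nk))$ via Lemma~\ref{lem:finitegraphs} (using that $N$ is an integer so $f(2^k)=Nk$), then conclude by summability of the resulting bounds; the paper simply unwinds the first Borel--Cantelli lemma by hand, showing the tail sums $\sum_{k\geq m}P_k$ tend to $0$. Your evaluation $(2^k)^{-2Nk}=2^{-2Nk^2}$ is in fact the correct one (the paper's displayed ``$=2^{(1-2N)k}$'' is only a valid upper bound, not an equality), and either bound suffices.
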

\begin{proof}
Let $N$ be the number guaranteed by Lemma~\ref{lem:finitegraphs}. For any given $k \in \N$, let $P_k$ denote the probability that the interval $[2^k,2^{k+1})$ contains a $G$-free graph of size $kN$. By Lemma~\ref{lem:finitegraphs} (setting $n=2^k$, which makes $f(n) = \lceil N \log_2 2^k \rceil = kN$), if $k$ is sufficiently large then
$$P_k < (2^k)^{-2(kN)} = 2^{(1-2N)k}.$$
Because $N \in \N$, we have $1-2N = -a$ for some $a \in \N$, and the $P_k$ are bounded above by the geometric sequence $2^{-ak}$.

Let $\varphi(H,G,m)$ denote the statement that, for some $k \geq m$, the interval $[2^k,2^{k+1})$ contains a $G$-free graph of size $kN$. In other words, $\varphi(H,G,m)$ is the statement that $(\ddagger)$ fails at $m$. We have
$$P_n(\varphi(H,G,m)) = \sum_{k \geq m}P_k \leq \sum_{k \geq m}2^{-ak} = \frac{2^{1-m}}{2^a-1}.$$
Thus $P_n(\varphi(H,G,m))$ approaches $0$ as $m$ grows large. Therefore, almost surely, a random graph on $\N$ will fail to satisfy $\varphi(H,G,m)$ for some $m \in \N$. It follows that almost every graph on $\N$ satisfies $(\ddagger)$.
\end{proof}

\begin{proof}[Proof of Theorem~\ref{thm:second}]
We will prove the contrapositive: \emph{if $A \sub \N$ fails to contain some finite graph $G$ as an induced subgraph, then $A$ is not substantial}. There are only countably many finite graphs, so, by the countable additivity of probability, it suffices to prove for any particular finite graph $G$ that, for almost every graph on $\N$, every $G$-free $A \sub \N$ fails to be substantial.

Let $G$ be a fixed finite graph. By Lemma~\ref{lem:finite}, it suffices to show that any graph on $\N$ with property $(\ddagger)$ has no substantial $G$-free sets. Suppose we have such a graph, and let $A \sub \N$ be $G$-free. Let $m$ and $N$ be as in the statement of $(\ddagger)$. If $A$ is $G$-free, then so is every subset of $A$, in particular those of the form $A \cap [2^k,2^{k+1})$. Applying $(\ddagger)$, we have
$$|A \cap [2^k,2^{k+1})| \leq kN$$
for every $k \geq m$. Therefore
$$\sum_{n \in A \cap [2^k,2^{k+1})}\frac{1}{n} \,\leq\, \frac{kN}{2^k}$$
for every $k \geq m$. It follows that
$$\sum_{n \in A}\frac{1}{n} \,\leq\, \sum_{n < 2^m}\frac{1}{n} + \sum_{k \geq m}\frac{kN}{2^k}.$$
This sum converges, so $A$ is not substantial.
\end{proof}

Let $f: \N \to \R^+$ be some function with the property that $\sum_{n \in \N}\frac{1}{n} = \infty$ (a \emph{weight function}), and define $A \sub \N$ to be $f$-substantial provided that $\sum_{n \in A}\frac{1}{n} = \infty$. The $f$-substantial sets form a partition regular family. If $f$ goes to $0$ sufficiently quickly (e.g., if $f(n) = \nicefrac{1}{n^\e}$ for some fixed $\e > 0$), then the conclusion of Theorem~\ref{thm:second} still holds for the family of $f$-substantial sets. This follows from the given proof for Theorem~\ref{thm:second}, by appropriately modifying the last five lines.

\section{Two negative results}\label{sec:neg}

In this section we will prove two theorems, each stating that a broad class of partition regular families fails, almost surely, to satisfy the conclusion of Theorem \ref{thm:main} or \ref{thm:second}. We will begin with the easier-to-prove of these two theorems:

\begin{maintheorem3}
Let $G$ be any countable graph. For almost every graph on $\N$, there is a thick $A \sub \N$ with no edges. 
\end{maintheorem3}

Recall that $A \sub \N$ is \emph{thick} if it contains arbitrarily long intervals. The family of thick sets is not partition regular, but it is contained in many important partition regular families. This allows us to deduce from Theorem~\ref{thm:thick} the following corollary:

\begin{corollary}\label{cor:thick}
Let $\F$ be any of the following families:
\begin{enumerate}
\item sets containing arbitrarily long arithmetic progressions.
\item (more generally) sets satisfying the conclusion of the polynomial van der Waerden theorem.
\item the piecewise syndetic sets.
\item sets of positive upper Banach density.
\item the IP-sets (i.e., sets satisfying the conclusion of Hindman's Theorem).
\item the $\Delta$-sets (i.e., sets containing $\{s_j-s_i : i,j \in \mathbb N, i < j\}$ for some infinite sequence $\langle s_i : i \in \mathbb N \rangle$ of natural numbers).
\item the central sets (i.e., sets belonging to some minimal idempotent ultrafilter).
\item sets containing infinitely many solutions to some particular partition regular system of linear equations (e.g., containing infinitely many solutions to the equation $x+y=z$).
\end{enumerate}
For almost every graph on $\N$, there is some $A \in \F$ containing no edges. In particular, there are sets in $\F$ that are not weakly universal.
\end{corollary}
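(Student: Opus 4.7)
The plan is to derive the corollary directly from Theorem~\ref{thm:thick}. That theorem furnishes, for almost every graph on $\N$, a thick $A \sub \N$ with no edges; such an $A$ is not weakly universal since any weakly universal set must contain $K_2$, and hence an edge. It therefore suffices to verify, family by family, that every thick set lies in the family $\F$ under consideration.

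Families (1)--(4) follow immediately from the fact that a thick set contains arbitrarily long intervals $[a, a+n]$: such intervals contain arithmetic progressions of any length (1); they contain any prescribed polynomial configuration $\{a, a+p_1(d),\dots,a+p_k(d)\}$ once one first picks $d$ and then takes $a$ to lie in a long enough interval of the thick set (2); every thick set is piecewise syndetic, being the trivial intersection of itself with the syndetic set $\N$ (3); and a set containing arbitrarily long intervals evidently has upper Banach density $1$ (4).

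For (5), (6), and (8), I would first show that every thick $A$ is an IP-set by a short recursion. Having chosen $x_1,\dots,x_n \in A$ so that every finite nonempty sum of the $x_i$ lies in $A$, let $s = x_1 + \dots + x_n$ and use thickness to pick $y$ with $[y, y+s] \sub A$; set $x_{n+1} = y$. Every new finite sum has the form $y + e$ for some $e \in \{0\} \cup \{\text{previous finite sums}\} \sub [0, s]$, and so lies in $[y, y+s] \sub A$. This shows $A$ is an IP-set (5); its partial sums $s_n = x_1 + \dots + x_n$ witness that $A$ is a $\Delta$-set (6); and for (8) the IP version of Rado's theorem (a consequence of the Central Sets Theorem) produces a solution to any fixed partition regular linear system inside every IP-subset of $A$, so applying it to the tail IP-sets generated by $\seq{x_n}{n \geq N}$ yields infinitely many solutions in $A$.

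The only family needing outside input is (7). Here I would cite the standard characterization from the algebra of $\beta\N$: $A$ is thick iff $\closure{A} \sub \beta\N$ contains a left ideal of $(\beta\N, +)$. Every such left ideal contains a minimal left ideal, every minimal left ideal contains an idempotent, and every idempotent in a minimal left ideal is minimal; hence $\closure{A}$ meets a minimal idempotent ultrafilter and $A$ is central. This last step is the main ``obstacle'' in the sense of requiring background from topological algebra in $\beta\N$, though once the characterization is cited the verification is immediate.
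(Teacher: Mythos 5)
Your overall strategy is exactly the paper's: reduce to Theorem~\ref{thm:thick} and check, family by family, that every thick set belongs to $\F$. Items (1)--(7) are correct, and you supply more detail than the paper does (the paper disposes of (1)--(4) as ``obvious,'' of (5)--(6) as ``easily proved by recursion'' --- your recursion is the intended one --- and of (7) by citing Theorem 4.48 of Hindman--Strauss, which is the characterization of thick sets as those whose closure in $\beta\N$ contains a left ideal, just as you argue).

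Item (8), however, contains a genuine error. It is not true that every IP set contains solutions to every partition regular system of linear equations. The single equation $x+z=2y$ (a $3$-term arithmetic progression) is partition regular by van der Waerden's theorem, yet the IP set of finite sums of distinct powers of $3$ --- equivalently, the set of integers whose base-$3$ expansion uses only the digits $0$ and $1$ --- contains no nontrivial $3$-term progression: if $a+c=2b$ with all three in this set, comparing digits base $3$ (where $2b$ has digits in $\{0,2\}$ and $a+c$ produces no carries) forces $a=c$. So your appeal to an ``IP version of Rado's theorem'' applied to the tail IP-sets of $\seq{x_n}{n \geq N}$ does not go through; the Central Sets Theorem yields solutions inside \emph{central} sets, not inside arbitrary IP sets. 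The fix is to route (8) through your item (7) rather than item (5): a thick set is central, every central set contains a solution to each partition regular linear system (this is the content of chapter 15 of Hindman--Strauss, which is what the paper cites), and one gets infinitely many solutions because deleting any finite subset of a thick set leaves a thick, hence central, set. Note also that intervals alone cannot substitute here: $[a,a+N]$ with $a$ large compared to $N$ contains no solution to $x+y=z$, so (8) genuinely needs centrality rather than just long intervals.
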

\begin{proof}
If a set is thick, then it is in every one of these eight families. This is obvious for $(1)$ - $(4)$, and is easily proved by recursion for $(5)$ and $(6)$. For $(7)$, see \cite{H&S}, Theorem 4.48. For $(8)$, see chapter 15 of \cite{H&S}.
\end{proof}

\begin{lemma}
Almost every graph on $\N$ has the following property:
\begin{itemize}
\item[$(\ddagger \hspace{-2.57mm} -\!)$] For every finite $F \sub \N$ and every $n \in \N$, there is an interval $[k,k+n]$ (disjoint from $F$) such that there are no edges between any member of $F$ and any member of $[k,k+n]$, and there are no edges between the members of $[k,k+n]$.
\end{itemize}
\end{lemma}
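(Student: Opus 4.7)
The plan is to reduce to a single fixed pair $(F,n)$ by countable additivity, and then, for that fixed data, exhibit an independent sequence of candidate intervals and apply a Borel--Cantelli-style argument so that at least one of them succeeds almost surely.

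First I would note that there are only countably many pairs $(F,n)$ with $F \sub \N$ finite and $n \in \N$. By the countable additivity of probability, it suffices to fix such a pair and show that almost every graph on $\N$ contains some interval of the form $[k,k+n]$, disjoint from $F$, with no internal edges and no edges joining it to $F$; taking a countable intersection of probability-one events then gives property $(\ddagger \hspace{-2.57mm} -\!)$ on a set of full measure.

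Fix $F$ and $n$, and define a pairwise disjoint sequence of candidate intervals $I_j = [k_j, k_j+n]$ by setting $k_1 > \max F$ and $k_{j+1} = k_j + n + 1$. For each $j$, let $E_j$ be the event that $I_j$ spans no edges in the graph and that no vertex of $I_j$ is joined to any vertex of $F$. Since every potential edge is present independently with probability $\nicefrac{1}{2}$ and $|I_j| = n+1$, we have
\[
\Pr(E_j) \,=\, \left(\tfrac{1}{2}\right)^{\binom{n+1}{2} + (n+1)|F|} \,=:\, p \,>\, 0.
\]
Because the $I_j$ are pairwise disjoint (and all disjoint from $F$), the events $E_j$ depend on pairwise disjoint sets of potential edges, namely the pairs within $I_j$ together with the pairs joining $I_j$ to $F$. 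Hence the $E_j$ are mutually independent, and
\[
\Pr\!\left(\bigcap_{j \leq N} E_j^c\right) \,=\, (1-p)^N \,\longrightarrow\, 0 \quad \text{as } N \to \infty,
\]
so almost surely at least one $E_j$ occurs and yields the desired interval.

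There is essentially no obstacle here. The only point worth checking is that pairwise disjointness of the $I_j$ really does make the events $E_j$ depend on disjoint families of edge-indicators (both the ``internal'' and the ``cross to $F$'' edges are separated), so that mutual independence — and hence the product bound $(1-p)^N$ — is legitimate; everything else is routine.
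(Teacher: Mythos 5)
Your proposal is correct and follows essentially the same route as the paper: reduce to a fixed pair $(F,n)$ by countable additivity, observe that each candidate interval succeeds with a fixed positive probability, and use independence of the events for disjoint, suitably spaced intervals to conclude that one succeeds almost surely. The only difference is cosmetic — you spell out the product bound $(1-p)^N \to 0$ where the paper simply invokes independence.
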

\begin{proof}
There are only countably many pairs $(F,n)$, where $F$ is a finite set of natural numbers and $n \in \N$. By the countable additivity of probability, it suffices to prove that the conclusion of $(\ddagger \hspace{-2.57mm} -\!)$ holds for a single, fixed finite $F \sub \N$ and some fixed $n \in \N$.

Given $F$ and $n$, it is clear that, for every $k > \max F$, the probability that no member of $[k,k+n]$ connects to any member of $F$ and that no two members of $[k,k+n]$ are connected is
$$\left( \frac{1}{2^{|F|}} \right)^{n+1} \cdot \frac{1}{2^{n+1 \choose 2}}.$$
In particular, the probability is positive and indenpenent of $k$. For $k$ in the infinite set $(n+1)\N$, these probabilities are also independent of each other. Therefore, almost surely, some $k$ must have this property.
\end{proof}

\begin{proof}[Proof of Theorem~\ref{thm:thick}]
Suppose we have a graph on $\N$ satisfying $(\ddagger \hspace{-2.57mm} -\!)$. We will construct a sequence of intervals by recursion. Begin by setting $I_1 = \{1\}$; then, given $I_1,I_2, \dots, I_{n-1}$, use $(\ddagger \hspace{-2.57mm} -\!)$ to find an interval $I_n$ of length $n$ such that no members of $I_n$ are connected to each other or to any member of $\bigcup_{j < n}I_j$. This defines a sequence $I_1,I_2,\dots$ of intervals, and the thick set $\bigcup_{j \in \N}I_j$ has no edges.
\end{proof}

We leave it as an exercise to show that the proof of Theorem~\ref{thm:thick} can be modified to show that, given any countable graph $G$, almost every graph on $\N$ contains a thick set $A \sub \N$ such that the induced subgraph on $A$ is isomorphic to $G$. It follows that, if $\F$ is any of the partition regular families mentioned in Corollary~\ref{cor:thick}, then there are no restrictions on which graphs will (almost surely) be realized as induced subgraphs on members of $\F$.






Our next theorem states that every partition regular family with a sufficiently simple definition will (almost surely) contain sets that are not universal. Here ``sufficiently simple'' means $\mathbf \Pi^0_2$, or $G_\delta$: a countable intersection of open sets.

Recall that the power set of $\N$, $\mathcal P(\N)$, has a natural topological structure (it can be naturally identified via characteristic functions with the Cantor space $2^\N$). The topology on $\mathcal P(\N)$ is the topology of finite agreement: basic open subsets of $\mathcal P(\N)$ are of the form
$$\tr{F,n} = \set{A \sub \N}{A \cap [1,n] = F}$$
where $F \sub [1,n]$. Roughly this means that $U \sub \mathcal P(\N)$ is open if for every $A \in U$, it can be determined that $A \in U$ by looking only at $A \cap [1,n]$ for some sufficiently large $n$. In other words, membership in $U$ is a condition satisfiable in finite time. By extension, membership in a $G_\delta$ subset of $\mathcal P(\N)$ can be thought of as a conjunction of countably many conditions, each satisfiable in finite time.

For example, the family of substantial sets is $\mathbf \Pi^0_2$ because it is the intersection of the countably many open sets
$$\textstyle V_n = \set{A \sub \N}{\sum_{m \in A}\frac{1}{m} > n}.$$
Other examples include the family of thick sets, or the families mentioned in parts $(1)$, $(2)$, and $(8)$ of Corollary~\ref{cor:thick}.

\begin{maintheorem4}
Let $\F$ be any $\mathbf \Pi^0_2$ partition regular family. For almost every graph on $\N$, there is some $A \in \F$ that has finite connected components. In particular, not every set in $\F$ is universal.
\end{maintheorem4}

\begin{corollary}
Let $\F$ be any of the following partition regular families:
\begin{enumerate}
\item the substantial sets.
\item (more generally) for any weight function $f: \N \to \R^+$ with the property that $\sum_{n \in \N}f(n)$ diverges, the family of all sets $A \sub \N$ such that $\sum_{n \in A}f(n)$ diverges.
\end{enumerate}
For almost every graph on $\N$, there is a set in $\F$ that is not universal.
\end{corollary}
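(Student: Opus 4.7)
The plan is to derive this corollary as an immediate consequence of Theorem~\ref{thm:Gdelta}: essentially all the work reduces to verifying that each listed family is a $\mathbf{\Pi}^0_2$ partition regular family, after which that theorem applies directly.

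Observe first that family $(1)$, the substantial sets, is exactly the case $f(n) = \nicefrac{1}{n}$ of family $(2)$, so it suffices to fix an arbitrary weight function $f : \N \to \R^+$ with $\sum_n f(n) = \infty$ and verify both properties for $\F_f = \set{A \sub \N}{\sum_{n \in A} f(n) = \infty}$. Partition regularity is immediate: if $A \in \F_f$ and $A = A_1 \cup \cdots \cup A_k$, then $\sum_{n \in A} f(n) \leq \sum_{i \leq k} \sum_{n \in A_i} f(n)$, so if every $\sum_{n \in A_i} f(n)$ were finite the total sum over $A$ would be finite as well, contradicting $A \in \F_f$; hence some $A_i \in \F_f$.

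For the topological condition, the argument mirrors the paper's earlier remark about substantial sets. Write $\F_f = \bigcap_{k \in \N} V_k$, where $V_k = \set{A \sub \N}{\sum_{n \in A} f(n) > k}$. Each $V_k$ is open: if $A \in V_k$, pick a finite $S \sub A$ with $\sum_{n \in S} f(n) > k$, and then for $m = \max S$ every $B \sub \N$ satisfying $B \cap [1,m] = A \cap [1,m]$ contains $S$ and so also lies in $V_k$; the collection of such $B$'s is a basic open neighborhood of $A$ in the finite-agreement topology on $\mathcal{P}(\N)$. Thus $\F_f$ is $\mathbf{\Pi}^0_2$, and Theorem~\ref{thm:Gdelta} applies. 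I do not foresee any real obstacle here: the heavy lifting lives entirely inside Theorem~\ref{thm:Gdelta}, and the corollary is a routine unpacking of definitions, with the only subtle point being that a finite-sum witness to $A \in V_k$ really does yield a neighborhood of $A$ in the correct topology on $\mathcal{P}(\N)$.
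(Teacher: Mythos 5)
Your proposal is correct and matches the paper's (implicit) argument: the paper states this corollary without proof, having already noted that the substantial sets are $\mathbf{\Pi}^0_2$ via exactly the decomposition $\bigcap_k V_k$ you use, and the intended derivation is precisely the routine application of Theorem~\ref{thm:Gdelta} that you carry out. Your verification of partition regularity and of the openness of each $V_k$ for a general weight function is sound.
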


Theorem~\ref{thm:Gdelta} implies that the conclusion of Theorem~\ref{thm:second} cannot be strengthened from ``almost universal'' to ``universal.'' In fact, it tells us more: (almost surely) a graph on $\N$ will have a substantial set with finite connected components. Thus Theorem~\ref{thm:second} has the strongest conclusion that could be hoped for.

Recall that a set $\mathcal X$ of subsets of $\N$ is called a \emph{tail set} if for every $A \in \mathcal X$ and every finite set $B$, $A \Delta B \in \mathcal X$; in other words, a tail set is a collection of subsets of $\N$ that is closed under finite modifications.

\begin{lemma}
Let $\F$ be a partition regular family containing only infinite sets. Then $\F$ is a tail set.
\end{lemma}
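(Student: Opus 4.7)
The plan is to reduce the claim that $\F$ is closed under symmetric differences with finite sets to two separate and easier closure properties: closure under adding finitely many points (which is immediate from closure under supersets) and closure under removing finitely many points (which requires the partition regularity hypothesis, together with the assumption that $\F$ contains only infinite sets).

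More concretely, fix $A \in \F$ and a finite set $B \sub \N$. Write
\[
A \Delta B = (A \setminus B) \cup (B \setminus A),
\]
so that $A \setminus B \sub A \Delta B$. Since $\F$ is closed under supersets, it suffices to show that $A \setminus B \in \F$. For this, I would partition $A$ as
\[
A = (A \cap B) \cup (A \setminus B),
\]
which is a partition into two pieces. By partition regularity, one of the two pieces belongs to $\F$. But $A \cap B \sub B$ is finite, and the hypothesis that $\F$ contains only infinite sets forces $A \cap B \notin \F$. Consequently $A \setminus B \in \F$, and hence $A \Delta B \in \F$.

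There is no real obstacle here: the whole argument is a one-line application of partition regularity to the trivial two-piece partition of $A$ determined by $B$, with closure under supersets used to pass from $A \setminus B$ to $A \Delta B$. The only subtlety worth flagging is the role of the ``infinite sets only'' assumption; without it, one could take $\F$ to be the family of all subsets of $\N$ containing $1$, which is partition regular but manifestly not a tail set, showing that this hypothesis cannot be dropped.
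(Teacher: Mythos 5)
Your proof is correct and is essentially the paper's argument: both split off a finite piece, use partition regularity plus the ``only infinite sets'' hypothesis to conclude the remaining piece $A \setminus B$ is in $\F$, and use upward closure to pass to $A \Delta B$. (The paper phrases this as applying partition regularity to the cover $A \subseteq (A \cap [1,\max B]) \cup (A \Delta B)$; your version via the exact partition $A = (A \cap B) \cup (A \setminus B)$ is the same idea, stated slightly more carefully.)
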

\begin{proof}
Let $A \in \F$, and let $B$ be a finite set. Because
$$A \subseteq (A \cap [1,\max B]) \cup (A \Delta B),$$
partition regularity implies that one of either $A \cap [1,\max B]$ or $B \Delta A$ is in $\F$. $A \cap [1,\max B]$ is finite, so we must have $A \Delta B \in \F$.
\end{proof}

For any $0 < p < 1$, we may define a probability measure $\mu_p$ on $\mathcal P(\N)$ by asserting that every subbasic open set of the form
$$\set{A \sub \N}{n \in A}$$
has measure $p$, and its complement
$$\set{A \sub \N}{n \notin A}$$
has measure $1-p$.

\begin{lemma}\label{lem:tailset}
Let $\F$ be a partition regular family containing only infinite sets. For every $0 < p < 1$, $\mu_p(\F)=1$.
\end{lemma}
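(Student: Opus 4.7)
The plan is to combine Kolmogorov's zero-one law with partition regularity via a symmetric random partition. The preceding lemma shows that $\F$ is a tail set, i.e., invariant under finite symmetric differences. Identifying $\mathcal P(\N)$ with the product space $\{0,1\}^\N$ equipped with the product measure $\mu_p$, this is exactly the statement that membership in $\F$ is independent of every finite block of coordinates; Kolmogorov's zero-one law then forces $\mu_p(\F) \in \{0,1\}$ for every $0 < p < 1$. Thus it suffices to prove $\mu_p(\F) > 0$.

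First I would handle the case $p = 1/k$ for an integer $k \geq 2$ by a symmetric partition. Note that $\N \in \F$, since $\F$ is nonempty and closed under supersets. Now assign each $n \in \N$ independently and uniformly to one of $k$ classes $C_1, \dots, C_k$; with probability $1$ this is a partition of $\N$, so by partition regularity some $C_i \in \F$ almost surely. Each $C_i$ has marginal distribution $\mu_{1/k}$, so by symmetry and a union bound,
$$k \cdot \mu_{1/k}(\F) \,=\, \sum_{i=1}^{k} \Pr[C_i \in \F] \,\geq\, \Pr[\text{some } C_i \in \F] \,=\, 1,$$
and hence $\mu_{1/k}(\F) \geq 1/k > 0$. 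The zero-one law upgrades this to $\mu_{1/k}(\F)=1$.

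To handle arbitrary $p$, pick an integer $k$ with $1/k \leq p$ and construct a monotone coupling: sample $A \sim \mu_{1/k}$, then for each $n \notin A$ independently add $n$ to $B$ with probability $(p - 1/k)/(1 - 1/k)$. A short computation shows $B$ has distribution $\mu_p$ and $A \sub B$. Since $\F$ is closed under supersets and $A \in \F$ almost surely, also $B \in \F$ almost surely, so $\mu_p(\F)=1$.

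The only delicate point is the invocation of the zero-one law; the preceding lemma was designed precisely to make this application immediate, so the remainder of the argument is a straightforward combination of the symmetric partition trick (to squeeze out positive measure for dyadic-rational parameters) and monotonicity in $p$ (to transfer from $p = 1/k$ to all of $(0,1)$).
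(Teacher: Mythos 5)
Your proof is correct, but the key step --- establishing $\mu_p(\F) > 0$ so that the zero--one law can finish --- is done by a genuinely different device than in the paper. The paper also reduces to positivity via the tail-set lemma and Kolmogorov, but then argues by contradiction with the complement map $c(A) = \N \setminus A$: if $\F$ were null then $\F \cup c[\F]$ would be null, yielding an $A$ with neither $A$ nor $\N \setminus A$ in $\F$, contradicting partition regularity applied to the two-cell partition $\N = A \cup (\N \setminus A)$. You instead use a random $k$-cell partition: partition regularity forces some cell into $\F$ surely, each cell is marginally $\mu_{1/k}$, and the union bound gives $\mu_{1/k}(\F) \geq 1/k$; a monotone coupling then transfers the conclusion to all $p \geq 1/k$ using upward closure of $\F$. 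Your route is arguably more robust: the paper's step asserting that $c$ sends $\mu_p$-null sets to $\mu_p$-null sets rests on a bounded-distortion claim that only holds coordinatewise and does not compound correctly (indeed $\mu_p$ and $\mu_{1-p}$ are mutually singular for $p \neq \nicefrac{1}{2}$, so $c$ does not preserve null sets there), whereas your argument needs no quasi-invariance of $\mu_p$ under any map and treats all $p$ uniformly. The one point both arguments leave tacit is that $\F$ must be $\mu_p$-measurable for the zero--one law (and for $\Pr[C_i \in \F]$) to apply to it; this is harmless in the paper's intended use, where $\F$ is Borel.
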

\begin{proof}
Fix $0 < p < 1$. By Kolmogorov's zero-one law, the $\mu_p$-measure of any tail set in $2^\N$ is either $0$ or $1$. We will show that $\F$ cannot have $\mu_p$-measure $0$. To see this, consider the map $c: \mathcal P(\N) \rightarrow \mathcal P(\N)$ sending every set to its complement. This map changes the measure of basic open sets by at most a factor of $\max \{\frac{1-p}{p},\frac{p}{1-p}\}$; therefore the same is true for all measurable sets and, in particular, $c$ sends null sets to null sets. Thus, if $\F$ is null, $\F \cup c[\F]$ is null, and there is some $A \sub \N$ such that $A \notin \F \cup c[\F]$. But $A \notin c[\F]$ if and only if $c(A) \notin \F$, so we have neither $A$ nor $c(A)$ in $\F$. This contradicts the partition regularity of $\F$, so $\F$ has measure $1$.
\end{proof}

\begin{lemma}
Let $\F$ be a partition regular family containing only infinite sets. Almost every graph on $\N$ has the following property:
\begin{itemize}
\item[$(\dagger \! \dagger)$] For every finite $F \sub \N$ and every type $t$ over $F$,
$$\set{n > \max F}{n \text{ has type }t\text{ over }F} \in \F.$$
\end{itemize}
\end{lemma}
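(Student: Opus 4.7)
The plan is to reduce the problem, by countable additivity, to a single fixed pair $(F,t)$, and then to realize the random set $T = \set{n > \max F}{n \text{ has type }t\text{ over }F}$ as essentially an instance of the product measure $\mu_p$ studied in Lemma~\ref{lem:tailset}, so that its membership in $\F$ is automatic (almost surely) from that lemma.

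First I would note that there are only countably many choices of a finite $F \sub \N$ together with a type $t$ over $F$. By countable additivity of probability, it suffices to prove that for one fixed such pair $(F,t)$, the event ``$T \in \F$'' has probability one. Let $k = |F|$. The key observation is that for distinct $n, n' > \max F$, the event ``$n$ has type $t$ over $F$'' depends only on the $k$ edges between $n$ and $F$, and so for different $n > \max F$ these events are mutually independent, each occurring with probability $2^{-k}$. Consequently, viewed as a random subset of $\N$, the distribution of $T$ is the same as that of $\pi(S)$, where $S \sub \N$ is distributed according to $\mu_p$ with $p = 2^{-k}$ and $\pi(S) = S \setminus [1,\max F]$.

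Next I would invoke Lemma~\ref{lem:tailset}: since $\F$ is partition regular and contains only infinite sets, $\mu_p(\F) = 1$ for $p = 2^{-k}$. The set $\pi(S) = S \setminus [1,\max F]$ differs from $S$ by the finite set $S \cap [1,\max F]$, so by the preceding lemma ($\F$ is a tail set) we have $\pi(S) \in \F$ if and only if $S \in \F$. Therefore
$$\Pr(T \in \F) = \Pr(\pi(S) \in \F) = \Pr(S \in \F) = \mu_p(\F) = 1,$$
which completes the proof for the fixed pair $(F,t)$.

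The main conceptual hurdle is recognizing that the random set $T$, though it lives inside the coordinate-rich edge space of all graphs, projects onto a collection of independent coin flips indexed by the vertices $n > \max F$, with bias exactly $2^{-k}$; once this is noted, Lemma~\ref{lem:tailset} does essentially all the work. The only minor technical wrinkle is that $T$ is automatically empty on $[1,\max F]$ rather than having random content there, but this discrepancy is a finite modification, and hence invisible to the tail set $\F$.
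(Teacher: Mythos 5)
Your proposal is correct and follows essentially the same route as the paper: reduce to a fixed pair $(F,t)$ by countable additivity, observe that the events ``$n$ has type $t$ over $F$'' are independent with probability $2^{-|F|}$, and apply Lemma~\ref{lem:tailset}. Your explicit use of the tail-set property to absorb the finite discrepancy on $[1,\max F]$ is a slightly more careful handling of a point the paper passes over by working directly in $\mathcal P(\N \setminus [1,\max F])$.
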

\begin{proof}
As usual, by the countable additivity of probability it suffices to prove the claim for a single fixed finite $F \sub \N$ and a single type $t$ over $F$.

The set of all vertices having type $t$ over $F$ is determined randomly: the probability is $\frac{1}{2^{|F|}}$ that any particular $n > \max F$ has type $t$ over $F$, independently of whether any other vertex has type $t$ over $F$. Therefore the set of all vertices $n > \max F$ having type $t$ over $F$ is a point in the space $\mathcal P(\N \setminus [1,\max F])$, chosen randomly according to the probability measure $\mu_p$, where $p = \frac{1}{2^{|F|}}$. By the previous lemma, the set of all vertices $n > \max F$ having type $t$ over $F$ is in $\F$.
\end{proof}

\begin{proof}[Proof of Theorem~\ref{thm:Gdelta}]
Let $\F$ be a $\mathbf \Pi^0_2$ partition regular family of sets. Let us assume that $\F$ contains only infinite sets (otherwise the theorem is trivially true for $\F$). Let $U_1,U_2,\dots$ be open subsets of $2^\N$ such that $\F = \bigcap_{n \in \N}U_n$. It suffices to show that if a graph on $\N$ satisfies $(\dagger \! \dagger)$, then there is some $A \in \F$ that is not universal.

Suppose we have such a graph. We will construct $A \in \F$ by recursion with the property that every connected component of $A$ is finite. At stage $n$ of the recursion, we will add finitely many points to $A$ that will ensure $A$ is in $U_n$, and at the same time will be disconnected from all the points already in $A$.

For convenience, set $k_0=0$. To begin the recursion, pick a finite set $F_1$ and $k_1 \in \N$, such that $F_1 \sub [1,k_1]$ and
$$\tr{F_1,k_1} \sub U_1$$
(there must be some such $F_1$ and $k_1$ because $U_1$ is open and nonempty). For the recursive step, suppose finite sets $F_1, F_2, \dots, F_{n-1}$ and natural numbers $k_1,k_2,\dots,k_{n-1}$ have been chosen already, and that they satisfy the following hypotheses:
\begin{itemize}
\item $k_1 < k_2 < \dots < k_{n-1}$.
\item for each $i < n$, $F_i \sub (k_{i-1},k_i]$.
\item for each $i < n$,
$$\tr{F_1 \cup F_2 \cup \dots \cup F_i,k_i} \sub U_i.$$
\item for each $i < j < n$, no member of $F_i$ connects to any member of $F_j$.
\end{itemize}
We will use $(\dagger \! \dagger)$ to find $k_n$ and $F_n$. Let $t$ denote the type over $[1,k_{n-1}]$ stating that a vertex does not connect to anything in $[1,k_{n-1}]$. By $(\dagger \! \dagger)$, the set
$$T = \set{n > k_{n-1}}{n \text{ has type } t \text{ over }[1,k_{n-1}]}$$
is in $\F$. Because $\F$ is closed under taking supersets,
$$T' = T \cup F_1 \cup F_2 \dots \cup F_{n-1} \in \F.$$
In particular, $T' \in U_n$. Because $U_n$ is open, there is some basic open subset $\tr{F,k}$ of $\mathcal P(\N)$ such that
$$T' \in \tr{F,k} \sub U_n.$$
Choose any $k_n > \max\{k,k_{n-1}\}$ and let $F_n = T' \cap (k_{n-1},k_n]$. Note that 
$$T' \cap [1,k_n] = F_1 \cup F_2 \cup \dots \cup F_n$$
because $\max (F_1 \cup F_2 \cup \dots \cup F_{n-1}) \leq k_{n-1} < \min T$. Therefore
$$\tr{F_1 \cup F_2 \cup \dots \cup F_n,k_n} \sub \tr{k,F} \sub U_n.$$
By our choice of $F_n$, no vertex of $F_n$ connects to any vertex of $F_i$ for any $i < n$. Thus all of our recursive hypotheses are still satisfied at $n$, and this completes the recursion.

Thus we obtain a sequence $F_1,F_2,F_3,\dots$ of finite sets and a sequence $k_1 < k_2 < \dots$ of natural numbers such that
\begin{itemize}
\item for each $n \in \N$, $F_n \sub (k_{n-1},k_n]$.
\item for each $n \in \N$,
$$\tr{F_1 \cup F_2 \cup \dots \cup F_n,k_n} \sub U_n.$$
\item for any $m < n \in \N$, no member of $F_m$ connects to any member of $F_n$.
\end{itemize}
Let $A = \bigcup_{n \in \N}F_n$. By design, we have $A \in \tr{F_1 \cup F_2 \cup \dots \cup F_n,k_n} \sub U_n$ for every $n$, so that $A \in \bigcap_{n \in \N}U_n = \F$. On the other hand, it is clear that every connected component of $A$ is finite.
\end{proof}

Let us say that a partition regular family is \emph{nontrivial} if it contains only infinite sets. By a proof very similar to that of Lemma~\ref{lem:tailset}, one may show that every nontrivial partition regular family is co-meager. Also, any such family $\F$ has dense complement in $\mathcal P(\N)$ (because the set of finite subsets of $\N$ is dense in $\mathcal P(\N)$); in particular, any closed subset of $\F$ is nowhere dense. Applying the Baire Category Theorem, it follows that there are no nontrivial $\mathbf \Sigma^0_2$ partition regular families. Combining this observation with Theorems \ref{thm:main}, \ref{thm:second}, and \ref{thm:Gdelta}, the following picture emerges:

\begin{center}
\begin{tikzpicture}[style=thick, xscale=.45,yscale=.55]

\draw[gray] (0,0) node {$\mathbf \Delta^0_1$} -- (0,0);
\draw[gray] (3,1.5) node {$\mathbf \Sigma^0_1$} -- (3,1.5);
\draw[gray] (3,-1.5) node {$\mathbf \Pi^0_1$} -- (3,-1.5);

\draw[gray] (6,0) node {$\mathbf \Delta^0_2$} -- (6,0);
\draw[gray] (9,1.5) node {$\mathbf \Sigma^0_2$} -- (9,1.5);
\draw (9,-1.5) node {$\mathbf \Pi^0_2$} -- (9,-1.5);

\draw (12,0) node {$\mathbf \Delta^0_3$} -- (12,0);
\draw (15,1.5) node {$\mathbf \Sigma^0_3$} -- (15,1.5);
\draw (15,-1.5) node {$\mathbf \Pi^0_3$} -- (15,-1.5);

\draw (18,0) node {$\mathbf \Delta^0_4$} -- (18,0);
\draw (21,1.5) node {$\mathbf \Sigma^0_4$} -- (21,1.5);
\draw (21,-1.5) node {$\mathbf \Pi^0_4$} -- (21,-1.5);

\draw (24,0) node {$\mathbf \dots$} -- (24,0);

\draw[dashed] (14,2.5) -- (1,-4);
\draw[dashed] (23,-4) -- (10,2.5);
\draw[dashed] (17,-4) -- (9,0);

\draw (-1,-3.5) node {\scriptsize I} -- (-1,-3.5);
\draw (9,-3.5) node {\scriptsize II} -- (9,-3.5);
\draw (19,-3.5) node {\scriptsize III} -- (19,-3.5);
\draw (25,-3.5) node {\scriptsize IV} -- (25,-3.5);

\end{tikzpicture}\end{center}

In region I, there are no nontrivial partition regular families. In region IV lies the family of positive upper density sets (which is $\mathbf \Sigma^0_3$) so it is possible for families in this region to satisfy the conclusion of Theorem~\ref{thm:main}. A family in region II does not satisfy the conclusion of Theorem~\ref{thm:main} (by Theorem~\ref{thm:Gdelta}), but it may satisfy the weaker conclusion of Theorem~\ref{thm:second} (e.g., the family of substantial sets). In region III it seems that the results of this paper have nothing to say. We leave it as an open question whether it is possible for a partition regular $\mathbf \Delta^0_3$ or $\mathbf \Pi^0_3$ family to satisfy the conclusion of Theorem~\ref{thm:main}. For that matter, we leave it as an open question whether there is a partition regular $\mathbf \Delta^0_3$ or $\mathbf \Pi^0_3$ family that is not also $\mathbf \Pi^0_2$.

\end{document}